\setlist{
listparindent=\parindent,
parsep=0pt,
}
\newcommand\CorrespondingAuthor[1]{%
\def\@makefnmark{}%
\footnotetext{Corresponding author: #1}%
}
\renewenvironment{abstract}{%
\small%
\providecommand\keywords{%
\par\medskip\noindent\textit{Keywords:}\xspace}%
\begin{center}%
\bfseries \abstractname\vspace{-.5em}\vspace{\z@}%
\end{center}%
\quote%
}{\endquote}
\newcommand{\de}{\mathrm{d}}
\newcommand{\M}{{\mathcal M}}
\newcommand{\s}{{\mathcal S}}
\newcommand{\x}{{\mathbf x}}
\newcommand{\y}{{\mathbf y}}
\newcommand{\N}{\mathcal N}
\newcommand{\R}{\mathbb R}
\newcommand{\Z}{\mathbb Z}
\newcommand{\X}{{\mathcal X}}
\newcommand{\1}{{\mathbf 1}}
\newcommand{\V}{{\mathcal V}}
\def\P{\mathbb P}
\def\E{\mathbb E}
\DeclareMathOperator{\e}{e}
\newcommand{\beann}{\begin{eqnarray*}}
\newcommand{\eeann}{\end{eqnarray*}}
\newtheorem{thm}{Theorem}
\newtheorem{remark}{Remark}
\newtheorem{conj}{Conjecture}
\numberwithin{equation}{section}
\begin{document}
    
\title{
New density/likelihood representations for Gibbs models based on  generating functionals of point processes
}

\author{Ottmar Cronie}
\affil{
Department of Mathematical Sciences\\

Chalmers University of Technology \& University of Gothenburg, Sweden\\ 

ottmar@chalmers.se
}

\date{}

\maketitle
    
\begin{abstract}

Deriving exact density functions for Gibbs point processes has been challenging due to their general intractability, stemming from the intractability of their normalising constants/partition functions. This paper offers a solution to this open problem by exploiting a recent alternative representation of point process densities. Here, for a finite point process, the density is expressed as the void probability multiplied by a higher-order Papangelou conditional intensity function. By leveraging recent results on dependent thinnings, exact expressions for generating functionals and void probabilities of locally stable point processes are derived. Consequently, exact expressions for density/likelihood functions, partition functions and posterior densities are also obtained. 
The paper finally extends the results to locally stable Gibbsian random fields on lattices by representing them as point processes.

\keywords 
Exact Bayesian inference,
Exact likelihood inference,
Generating functional, 
Gibbs state, 
Ising model, 
Janossy density, 
Local stability, 
Papangelou conditional intensity, 
Random field, 
Thinning, 
Void probability
\end{abstract}

\section{Introduction}

The study of point processes has long been a topic of significant scientific interest \citep{BRT15,VanLieshoutBook,MW04,DVJ1,DVJ2,baccelli2020}. 
Heuristically, a point process may be described as a generalised random sample where one allows a random sample size and dependence between the sample points. Consequently, they have been instrumental in modelling complex spatial patterns where interactions between points play a crucial role, and they have been exploited in a wide range of applications, in various scientific disciplines. 

Most point processes encountered belong to the family of Gibbs processes. Now, the precise definition of a Gibbs process varies a bit between sources in the literature but a convenient one is assuming the existence of a so-called Papangelou conditional intensity function \citep{MW04,betsch2023structural}, a form of conditional density function, as this is equivalent to the distribution of the point process being a Gibbs measure on the space of point configurations \citep{dereudre2019introduction}. 
Despite their utility, a longstanding challenge 
in the study of Gibbs processes 
has been the derivation of exact density functions. 
Assuming that it exists, the (Janossy) density function of a Gibbs process 
is governed by an energy function determining the probability distribution of the points of the point process  \citep{DVJ1,DVJ2,dereudre2019introduction}. 
The energy function thus governs marginal allotment of points as well as points' interactions, which can be attractive or repulsive, or a combination of both, thereby enabling highly intricate spatial patterns. 
The complexity of these models, coupled with the intricate nature of their interactions, has made it difficult to derive such density functions analytically. 
The reason for this is that the normalising constants of such density functions, so-called partition functions, have not been known in closed form. 
This has been a significant barrier to further advancements in the field, in particular in statistical settings where the intractability of density/likelihood functions has hindered the exploitation of many classical statistical ideas. Existing methods often rely on computationally intensive Monte Carlo simulations or approximations, which can be inaccurate or inefficient, especially in high-dimensional settings \citep{MW04}.

This paper offers an almost complete solution to the long-standing open problem of finding closed forms for densities of point processes, thereby bringing us one step closer to overcoming the limitations that have hindered progress in this area for a very long time. Specifically, by exploiting a recent alternative representation of point process densities by \citet{betsch2023structural}, for a finite point process one can write the density function as the associated void probability, i.e.\ the probability that it has no points, multiplied by a higher-order Papangelou conditional intensity function evaluated in the proposed point configuration and conditioned on the empty point configuration. Since we typically know the Papangelou conditional intensity function of a point process explicitly, what is left is obtaining explicit expressions for void probabilities of point processes. By a result of \citet{last2023disagreement}, any locally stable point process can be obtained as a dependent thinning of a homogeneous Poisson process, where the retention probability is given by the conditional intensity multiplied by an intractable term. Note that a point process is locally stable if its Papangelou conditional intensity is bounded by some finite constant/function \citep{MW04}. 
The next step is to exploit the recent formalisation of thinning of \citet{cronie2024cross}, which tells us that thinnings are 
marginals of binary marked point processes. These observations allow us to exploit standard tools, in particular generating functionals of Poisson processes, to obtain expressions for the void probabilities of locally stable point processes. The obtained void probability expressions are, however, based on the (conditional) expectations of the binary marking field/function used to obtain the thinning. Such expectations are not yet known explicitly, but the author provides a conjecture on its explicit form and thus also conjectures for generating functionals, void probabilities, density/likelihood functions, partition functions and posterior density functions in Bayesian settings. 
Moreover, 
Gibbs processes may be viewed as limits of Gibbsian random fields, where the associated lattices become more and more dense. Such models, which are common in e.g.\ statistical physics, have density functions given by exponentials of energy functions on the lattices in question \citep{georgii2006canonical, van2019theory}. Also here the density functions are intractable due to intractable partition functions. 
When such a random field takes binary values, e.g.\ the Ising model on a finite lattice, we may also represent it as a point process on the lattice in question. Consequently, the results and conjectures obtained for point processes apply also here, provided that the random fields are locally stable, i.e.\ the associated local characteristics (conditional probabilities) are bounded.

\section{Preliminaries}


Consider a general (complete separable metric) space $\s$ with metric $d(\cdot,\cdot)$, most notably (a compact subset of) the $d$-dimensional Euclidean space, $\R^d$, $d\geq1$. 
Throughout, all sets considered will be Borel sets. 
Moreover, endow $\s$ with a suitable Borel reference measure $B\mapsto|B|=\int_B\de u$, e.g.\ Lebesgue measure in the Euclidean setting. As all functions under consideration here will be measurable, they will be introduced and used without emphasising their measurablility. 

Now, let $\X$ be the space of point patterns/configurations $\x=\{x_i\}_{i=1}^n\subseteq\s$, $n\geq0$, which satisfy the local finiteness property that 
$\x(B)=\#(\x\cap B)=\sum_{x\in\x}\1\{x\in B\}<\infty$ for all bounded $B\subseteq\s$. Using that $\x$ is the support of the locally finite counting measure $\x(\cdot)$, we can identify these two entities with each other and equip $\X$ with a Prohorov-type metric, which turns $\X$ itself into a general space \citep{DVJ1,DVJ2}. We denote the induced Borel sets by $\N$. 
Note that, by construction, any element $\x\in\X$ is simple, meaning that $\x(\{u\})\in\{0,1\}$ for any $u\in\s$. 
Given a suitable probability space $(\Omega,\mathcal{F},\P)$, a (simple) point process $X=\{x_i\}_{i=1}^N$ in $\s$ is defined as a random element in $(\X,\N)$, with distribution $P(A)=\P(X\in A)$, $A\in\N$. If the total point count, $N=X(S)$, is almost surely finite, which e.g.\ is the case when $\s$ is bounded, we say that $X$ is a finite point process.

\subsection{Distributional characteristics}

We next provide an overview of a set of distributional characteristics for a point process $X$ in $\s$, which will all play an important role here.

\subsubsection{Generating functionals and void probabilities}

The distribution of $X$ is completely characterised by its (probability) generating functional, which is defined by
\[
G(f) = \E\left[\prod_{x\in X} f(x)\right] = \E\left[\prod_{x\in X} (1-g(x))\right],
\qquad 
f\in \mathcal{V}(\s),
\]
where $\mathcal{V}(\s)$ is the class of functions $f:\s\to\R$ such that $g=1-f:\s\to[0,1]$ has bounded support \citep[Definition 9.4.IV]{DVJ2}. In particular, the void probability distribution $\V(B)$, $B\subseteq\s$, which completely characterises the distribution of $X$ \citep{VanLieshoutBook}, can be expressed by means of $G(\cdot)$ through 
$$
\V(B)=\P(X(B)=0)= \E\left[\prod_{x\in X} \1\{x\notin B\}\right]
= \E\left[\prod_{x\in X} (1-\1\{x\in B\})\right] = G(1-\1\{\cdot\in B\}).
$$


\subsubsection{Papangelou conditional intensities and Gibbs processes}

Another quantity which characterises the distribution of $X$ is the (Papangelou) conditional intensity $\lambda$ \citep{DVJ2}, 
which 
satisfies the Georgii-Nguyen-Zessin (GNZ) formula:
\begin{align}
    \label{eq:GNZ}
\E\left[\sum_{x\in X}
    h(x,X\setminus\{x\})\right]
  =&
     \int_{\s}
     \E
     \left[
     h(u,X)
     \lambda(u;X)
    \right]\de u
\end{align}
for any non-negative (potentially infinite)  $h:\s\times\X_{\s}\to\R$.
Heuristically, $\lambda(u;\x)\de u$ is the conditional probability of having a point of $X$ in an infinitesimal neighbourhood $du$ of $u\in\s$, given that $X$ coincides with $\x$ outside $du$ \citep{VanLieshoutBook}. Following e.g.\ \citet{betsch2023structural}, any point process $X$ with a well-defined conditional intensity is referred to as a Gibbs process.

From the conditional intensity of $X$ we may construct higher-order conditional intensities, which are defined as the permutation-invariant function \citep{betsch2023structural}
\begin{align*}
\lambda^n(x_1,\ldots,x_n;X)
  =
  \lambda(x_1;X)
  \lambda(x_2;X\cup\{x_1\})
  \cdots
  \lambda(x_n;X\cup\{x_1,\ldots,x_{n-1}\})
\end{align*}
for any $n\geq1$ and any $\x=\{x_1,\ldots,x_n\}\subseteq\s$. Here, $\lambda=\lambda^1$ and we will sometimes use the compact form $\lambda^{\#\x}(\x;X)$, $\x\in\X$. If we integrate the $n$th-order conditional intensity with respect to the distribution of $X$, we obtain the associated intensity/product density function 
$$
\rho^n(u_1,\ldots,u_n)=\E[\lambda^n(u_1,\ldots,u_n;X)]
, 
\qquad
\{u_1,\ldots,u_n\}\subseteq\s,
$$
where one often writes $\rho(\cdot)=\rho^1(\cdot)$ and refers to this first-order characteristic as 'the' intensity function. 

The most basic (and important) example of a Gibbs process is a Poisson processes, where $\lambda^n(u_1,\ldots,u_n;\cdot)=\rho^n(u_1,\ldots,u_n)=\rho(u_1)\cdots\rho(u_n)$, $u_1,\ldots,u_n\in\s$, $n\geq1$, and $\V(B)=\exp\{-\int_B\rho(u)\de u\}$. 
Poisson process are, in turn, strongly connected to a particularly tractable class of processes, which are the locally stable ones. These satisfy that \citep{MW04}
$$
\lambda(\cdot;\cdot)\leq\alpha,
$$
for some constant $\alpha<\infty$, whereby a Poisson process with intensity $\alpha$ has a conditional intensity which dominates the conditional intensity $\lambda$.
Moreover, if $\lambda(u;\x)\leq \lambda(u;\y)$ or $\lambda(u;\x)\geq \lambda(u;\y)$, $u\in\s$, whenever $\x\subseteq\y$, we call $\lambda$ and $X$ attractive or repulsive, respectively; note that Poisson processes are both attractive and repulsive. 

\subsubsection{Janossy densities}

Another entity which completely determines that distribution of a point process $X$ is its family of local Janossy densities, $j(\x|B)$, $\x\in\X$, for bounded $B\subseteq\s$. They are the symmetric density functions satisfying \citep{DVJ1, DVJ2}
\[
j(\{x_1,\ldots,x_n\}|B)\de x_1\cdots\de x_n = \P\left(X\cap B=\{x_i\}_{i=1}^N\subseteq\bigcup_{i=1}^n dx_i\right),
\]
given infinitesimal neighbourhoods $\de x_1,\ldots,\de x_n$ of the individual points of $\x=\{x_1,\ldots,x_n\}\in\X$. 
More commonly, in the literature one typically encounters a characterisation of the distribution $P(\cdot|B)=\P(X\cap B\in \cdot)=\int_{\X}\1\{\x\in\cdot\} f(\x|B)\Pi(d\x)$ of $X\cap B$ in terms of its density $f(\cdot|B)$ with respect to the distribution $\Pi$ of a finite Poisson process on $B$ with integrable intensity function $\pi(u)$, $u\in B$. 
These two notions of point process densities are connected by the relationship \citep[Section 10.4]{DVJ2}
\begin{align}
\label{e:PoissonDensity}
f(\x|B)= j(\x|B)\exp\left\{\int_B\pi(u)\de u\right\}.
\end{align} 
For convenience, one here typically lets $\Pi$ be the distribution of a Poisson process with unit intensity, 
whereby $f(\x|B)= j(\x|B)\exp\{|B|\}$. 
Considering a parametrised local Janossy density, 
it is typically expressed on the form \citep[Section 7]{DVJ1}
\[
j_{\theta}(\x|B) = 
\1\{\x\subseteq B\}\frac{\e^{-E_{\theta}(\x|B)}}{\mathcal{Z}_{\theta}^B}, \qquad \x\in\X, \theta\in\theta\subseteq\R^k, k\geq1, 
\]
where $E_{\theta}(\cdot|B)$ is the associated energy function and the normalising constant $\mathcal{Z}_{\theta}^B$, which ensures that $j_{\theta}(\cdot|B)$ is a density function, is commonly referred to as the partition function. 
As one would expect, local densities may be used to obtain the (localised) conditional intensity  \citep{VanLieshoutBook,DVJ2} 
\begin{align}
\label{e:PapangelouFinite}
    \lambda_{\theta}(u;\x|B) 
&= \frac{f_{\theta}((\x\setminus\{u\})\cup\{u\}|B)}
{f_{\theta}(\x\setminus\{u\}|B)}
\\
&= \frac{j_{\theta}((\x\setminus\{u\})\cup\{u\}|B)}
{j_{\theta}(\x\setminus\{u\}|B)}
\nonumber
\\
&=
\exp\left\{-E_{\theta}(\x\cup\{u\}|B) + E_{\theta}(\x|B)\right\}
 \nonumber
\end{align}
of $X\cap B$, assuming that $P(\cdot|B)$ follows the parametrisation $\theta$; this is what allows us to interpret the conditional intensity as a conditional density. 
Note that the partition functions in the numerator and denominator cancel each other.
Unfortunately, in contrast to the conditional intensity, for all but a few trivial examples, the density function is not explicitly known, as a consequence of the partition function not being known explicitly.

\subsection{Maximum likelihood estimation}

Next, consider a finite point process $X$ on $\s$ and write $j_{\theta}(\cdot)=j_{\theta}(\cdot|\s)$, $\theta\in\Theta$. 
Given a point pattern $\x$ in $\s$, which we assume is a realisation from $j_{\theta_0}(\cdot)$ for some unknown $\theta_0\in\Theta$, consider the objective of finding an estimate $\widehat\theta=\widehat\theta(\x)\in\Theta$ of $\theta_0$. Ideally, one would like to solve this by carrying out maximum likelihood estimation, 
i.e.\ 
maximising the 
log-likelihood function 
\citep[Section 7]{DVJ1}
\[
\ell(\theta;\x)
=
\log j_{\theta}(\x)
= \log\frac{\e^{-E_{\theta}(\x)}}{\mathcal{Z}_{\theta}}
=
-E_{\theta}(\x) - \log\mathcal{Z}_{\theta}
, \qquad \theta\in\Theta, \x\in\X.
\]
Unfortunately, the intractability of the partition function, which depends on the model parameters, spills over on the density function, which in turn renders exact maximum likelihood estimation infeasible. For this reason, a range of alternatives have been proposed in the literature \citep{MW04, VanLieshoutBook,BRT15,Coeurjolly2019understanding, cronie2024cross}. 
As the partition functions in the numerator and denominator of the conditional intensity representation cancel each other, many of these alternatives are based on conditional intensities instead of densities. 
Here, the most prominent alternative is arguably pseudolikelihood estimation, where one maximises the log-pseudolikelihood function 
\begin{align}
\label{e:PL}
p\ell(\theta;\x)
=
\sum_{i=1}^{\#\x}
\log\lambda_{\theta}(x_i;\x\setminus\{x\})
-\int_{\s} \lambda_{\theta}(u;\x) \de u
\end{align}
in order to obtain an estimate $\widehat\theta$. When we are considering a Poisson process with $\lambda_{\theta}(u;\cdot)=\rho_{\theta}(u)$, $u\in\s$, then $p\ell(\theta;\x)=\ell(\theta;\x)$, i.e.\ the pseudolikelihood function coincides with the likelihood function. It is however the case that pseudolikelihood estimation does not deliver a satisfactory performance when the interactions in the underlying point process are strong \citep{BRT15}.




    

\section{New representations for generating functionals, density functions, likelihood functions and posterior densities}

We have seen that conditional intensities may be obtained through Janossy densities but one may also ask if the reversed holds true, i.e.\ if we can obtain Janossy densities from conditional intensities. 
Recently, \citet{betsch2023structural} provided an alternative characterisation of local Janossy densities which confirms that this is indeed possible. Considering the parametric setting, let $X_{\theta}$ be a point process on $\s$ which is distributed according to the conditional intensity $\lambda_{\theta}$, $\theta\in\Theta$, 
and consider its void probability
\[
\V_{\theta}(B) = \P(X_{\theta}(B)=0),
\qquad
B\subseteq\s.
\]
By \citet{betsch2023structural} we now have 
\begin{align*}
j_{\theta}(\x|B)
&=
\1\{\x\subseteq B\}
\E[\1\{X_{\theta}(B)=0\}\lambda_{\theta}^{\#\x}(\x;X_{\theta})]
\\
&=
\1\{x_1,\ldots,x_n\in B\}
\E[
\lambda_{\theta}^n(x_1,\ldots,x_n;X_{\theta})|X_{\theta}\cap B=\emptyset]
\V_{\theta}(B)
\nonumber
\end{align*}
for $\x=\{x_1,\ldots,x_n\}\in\X$ and bounded $B\subseteq\s$. 
%
%
In particular, when $\s$ is bounded, so that $X_{\theta}$ is a finite point process, 
we obtain 
\begin{align}
\label{eq:Janossy}
j_{\theta}(\x)
&=
\E[\1\{X_{\theta}(\s)=0\}\lambda_{\theta}^{\#\x}(\x;X_{\theta})]
\\
&=
\lambda_{\theta}^n(x_1,\ldots,x_n;\emptyset)
\V_{\theta}(\s)
\nonumber
\\
&=
  \lambda_{\theta}(x_1;\emptyset)
  \lambda_{\theta}(x_2;\{x_1\})
  \cdots
  \lambda_{\theta}(x_n;\{x_1,\ldots,x_{n-1}\})
  \V_{\theta}(\s)
  \nonumber
\\
&=
\e^{-E_{\theta}(\x) + E_{\theta}(\emptyset)}
\V_{\theta}(\s)
,
\nonumber
\\
\ell(\theta;\x)
&=
\sum_{i=1}^{\#\x}
\log\lambda_{\theta}(x_i;\{x_0,\ldots,x_{i-1}\})
+
\log \V_{\theta}(\s)
\nonumber
\\
&=
-E_{\theta}(\x) + E_{\theta}(\emptyset)
+
\log \V_{\theta}(\s)
\nonumber
\end{align}
where $x_0=\emptyset$, 
whereby the partition function is given by 
\[
\mathcal{Z}_{\theta}
=
\e^{-E_{\theta}(\emptyset)}/\V_{\theta}(\s).
\]
Hence, the density function evaluated in $\x$ and $\theta$ is given by the normalised exponential energy decrease, going from the empty configuration to $\x$. 
Moreover, the representation in \eqref{eq:Janossy} provides us with an important 
algorithmic interpretation of the density: we first account for starting without any points in $\s$ and then we sequentially add points, conditionally on the already added points, according to the conditional density. 
To understand the ramifications of the characterisation \eqref{eq:Janossy}, as already emphasised, for most common models we do not know the explicit form of the density, because of the intractable partition function, but we know the explicit form of the conditional intensity. 
We further see that a key difference between the log-likelihood function and the log-pseudolikelihood function 
is that the latter conditions on all of the data while the former only conditions on parts of it.

More importantly, however, is that 
\eqref{eq:Janossy} 
tells us is that if we manage to explicitly compute the void probability 
for a given (finite) point process, with known conditional intensity, then we also have an exact expression for the density/likelihood function.
To the best of the author's knowledge, the most precise result on the generating functional of a general locally stable Gibbs process has been provided by \citet{stucki2014bounds}, who gave explicit upper and lower bounds for $G(u)$. 
We here improve on their result by providing a semi-explicit characterisation of $G(u)$ for locally stable point processes. 
%
More specifically, 
following \citet{cronie2024cross}, a thinning $X$ of a point process $Y$ is obtained by assigning marks in $\M=\{0,1\}$ to $Y$, according to a (random) marking function/field $M:\s\to\M = \{0,1\}$ yielding $X=\{x\in Y:M(x)=1\}$. We say that $X$ is an (in)dependent thinning of $Y$ in the sense of e.g.\ \citet{iftimi2019second,cronie2016summary,DVJ2} if the marking is done (in)dependently. 
In addition, \citet{last2023disagreement} recently showed that it is indeed possible to obtain a locally stable Gibbs process $X$ as a (dependent) thinning of a Poisson process $Y$, provided that the constant intensity of $Y$ is given by the local stability bound $\alpha$ of $X$, i.e.\ $\lambda(\cdot;\cdot)\leq \alpha$. 
Denote the densities of the finite dimensional distributions of the marking function $M$ by $f_{\M}^n(m_1,\ldots,m_n|x_1,\ldots,x_n)$, $x_1,\ldots,x_n\in\s$, $m_1,\ldots,m_n\in\M$, $n\geq1$ (for any $n\geq1$, the reference measure on $\M^n$ is given by the $n$-fold product of the counting measure on $\M=\{0,1\}$ with itself). Moreover, by Kolmogorov's consistency theorem, $f_{\M}^1(m|u)=\sum_{m_1=0}^1\cdots\sum_{m_n=0}^1 f_{\M}^n(m,m_1,\ldots,m_n|u,x_1,\ldots,x_n)$, $u\in\s$, $m\in\M=\{0,1\}$. We now define the (marginal) retention probability function as 
\begin{align}
\label{e:RetentionProb}
p_{\lambda,\alpha}(u)=f_{\M}^1(1|u)\in[0,1], \qquad u\in\s.
\end{align}
These insights may now be exploited to obtain expressions for generating functionals and void probabilities of locally stable Gibbs processes.



\begin{thm}
\label{thm:GF}
Given a locally stable point process $X$ with conditional intensity $\lambda(\cdot;\cdot)\leq\alpha$, its generating functional satisfies
\begin{align}
\label{e:GF}
G(f) 
=
G(1-g) 
=
\exp\left\{-\int_{\s} g(u) \alpha p_{\lambda,\alpha}(u) \de u \right\}
,
\qquad 
f\in \mathcal{V}(\s).
\end{align}
In particular, for any $B\subseteq\s$, 
\[
\V(B) 
=
\exp\left\{-\int_B \alpha p_{\lambda,\alpha}(u)  \de u \right\}
.
\]

\end{thm}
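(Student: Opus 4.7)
My plan is to combine the dependent-thinning representation from \citet{last2023disagreement} with the marked-point-process formalism for thinnings developed in \citet{cronie2024cross}. The first step is to invoke these results and write $X=\{y\in Y:M(y)=1\}$, where $Y$ is a homogeneous Poisson process on $\s$ with constant intensity $\alpha$ and $M:\s\to\{0,1\}$ is the associated (generally $Y$-dependent) binary marking field whose single-site marginal is $p_{\lambda,\alpha}(u)=f_{\M}^1(1\mid u)$.

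Next, I would re-express the target generating functional as an expectation over $Y$ instead of $X$. Since the points of $Y$ with $M(y)=0$ contribute a factor of $1$ and those with $M(y)=1$ contribute $f(y)=1-g(y)$, we get $\prod_{x\in X}f(x)=\prod_{y\in Y}\bigl(1-M(y)g(y)\bigr)$, so by the tower property, conditioning on $Y$,
\[
G(f)=\E\!\left[\E\!\left[\prod_{y\in Y}\bigl(1-M(y)g(y)\bigr)\,\Big|\,Y\right]\right].
\]
The goal is then to reduce the inner conditional expectation to the product $\prod_{y\in Y}\bigl(1-p_{\lambda,\alpha}(y)g(y)\bigr)$, after which the Poisson generating functional identity $\E[\prod_{y\in Y}h(y)]=\exp\{-\alpha\int_{\s}(1-h(u))\de u\}$ applied with $h(u)=1-p_{\lambda,\alpha}(u)g(u)$ delivers
\[
G(f)=\exp\!\left\{-\int_{\s}g(u)\,\alpha\,p_{\lambda,\alpha}(u)\de u\right\},
\]
and specialising to $g=\1\{\cdot\in B\}$ immediately yields the void probability formula.

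The main obstacle is precisely this collapse of the joint conditional expectation of $\prod_{y\in Y}(1-M(y)g(y))$ given $Y$ into a product of single-site marginal retention factors. For a generic Gibbs $X$, the field $M$ is strongly $Y$-dependent — that dependence is, after all, what encodes the non-Poisson interactions in $X$ — so the reduction cannot arise from any naive conditional-independence argument. Rather, it must be extracted from the higher-order structure $f_{\M}^n(\cdot\mid x_1,\ldots,x_n)$ of the marking field, which is exactly the ingredient the introduction flags as not yet known in closed form and where the author's conjecture on $f_{\M}^n$ is anticipated to enter. I would therefore expect the proof to proceed modulo a structural assumption on $f_{\M}^n$ (or an equivalent characterisation of the joint marking expectations), with the remainder of the argument — the rewriting in terms of $Y$ and the final Poisson calculus — being essentially routine.
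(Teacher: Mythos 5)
Your strategy is the same as the paper's: represent $X$ as the dependent thinning $\{y\in Y:M(y)=1\}$ of a Poisson process $Y$ with intensity $\alpha$, rewrite $\prod_{x\in X}f(x)=\prod_{y\in Y}(1-M(y)g(y))$, condition on $Y$, and finish with the Poisson generating-functional identity. The difference lies entirely in how the inner conditional expectation is treated. The step you single out as the main obstacle --- collapsing $\E[\prod_{y\in Y}(1-M(y)g(y))\mid Y]$ into $\prod_{y\in Y}(1-p_{\lambda,\alpha}(y)g(y))$ --- is precisely the step the paper's proof performs, and it does so in two moves stated without justification: first it pulls the conditional expectation inside the product,
\[
\E\Bigl[\prod_{y\in Y}\bigl(1-\1\{M(y)=1\}g(y)\bigr)\Bigr]
=
\E\Bigl[\prod_{y\in Y}\bigl(1-\E[\1\{M(y)=1\}\mid Y]\,g(y)\bigr)\Bigr],
\]
which requires the marks $(M(y))_{y\in Y}$ to be conditionally independent given $Y$; second it replaces $\P(M(y)=1\mid Y)$ by the one-point marginal $f_{\M}^1(1\mid y)=p_{\lambda,\alpha}(y)$, which requires the retention decision at $y$ not to depend on the other points of $Y$. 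Neither property holds for a generic dependent thinning --- as you observe, the $Y$-dependence of $M$ is exactly what encodes the non-Poisson interaction --- so the paper's proof contains the very gap you flag, only silently. Your diagnosis is corroborated by the conclusion itself: the claimed $G(f)$ is the generating functional of a Poisson process with intensity $\alpha p_{\lambda,\alpha}(\cdot)$, and since the generating functional determines the distribution, a complete proof along these lines would force every locally stable Gibbs process to be Poisson. Hence the reduction cannot be routine, and any honest completion must go through the higher-order marking densities $f_{\M}^n$, which the paper itself only conjectures about. In short: same route as the paper, but you correctly refuse to assert the one step that the paper asserts without proof.
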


\begin{proof}
Exploiting the aforementioned thinning insights, by the law of total expectation and e.g.\ \citet[Corollary 2.1.5]{baccelli2020}, 
\begin{align*}
G(f)
&=
\E\left[\prod_{x\in X} (1-g(x))\right]
\\
&=
\E\left[\prod_{x\in Y} (1-\1\{M(x)=1\}g(x))\right]
\\
&=
\E\left[
\prod_{x\in Y} 
(1-\E\left[\1\{M(x)=1\}| Y\right]g(x))
\right]
\\
&=
\E\left[
\prod_{x\in Y} 
(1-\P(M(x)=1| Y)
g(x))
\right]
\\
&=
\E\left[
\prod_{x\in Y} 
(1-f_{\M}(1|x)g(x))
\right]
=
\E\left[
\prod_{x\in Y} 
(1-p_{\lambda,\alpha}(x)g(x))
\right]
=\e^{-\int g(x) \alpha p_{\lambda,\alpha}(x) \de x }
.
\end{align*}


\end{proof} 

Given any $\x=\{x_i\}_{i=1}^n
\in\X$, $\theta\in\Theta$ and $x_0=\emptyset$,
Theorem \ref{thm:GF} now tells us that the density function and the partition function are given by 
\begin{align*}
j_{\theta}(\x)
=&
\lambda_{\theta}(x_1;\emptyset)
  \lambda_{\theta}(x_2;\{x_1\})
  \cdots
  \lambda_{\theta}(x_{\#\x};\{x_1,\ldots,x_{\#\x-1}\})
\exp\left\{-\int_{\s} \alpha p_{\lambda,\alpha}(u;\theta) \de u \right\}
\\
=& 
\exp\left\{
-E_{\theta}(\x) + E_{\theta}(\emptyset)
-\int_{\s} \alpha p_{\lambda,\alpha}(u;\theta) \de u \right\}
\end{align*}
and
\begin{align*}
\mathcal{Z}_{\theta}
=&
\exp\left\{
- E_{\theta}(\emptyset)
+\int_{\s} \alpha p_{\lambda,\alpha}(u;\theta) \de u \right\}
.
\end{align*}

\subsection{A conjecture on the retention probabilities
}
Although Theorem \ref{thm:GF} provides us with an explicit form for the generating functional and the void probability, we do not have an explicit form for the retention probability $p_{\lambda,\alpha}(\cdot)$. Given the results in \citet{last2023disagreement}, what we can say, however, is that it should depend on the conditional intensity. In addition, since $\alpha$ is allowed to be any finite upper bound for the conditional intensity, \eqref{e:GF} should be constant for any $\alpha\geq \sup_{u\in\s,\x\in\X}\lambda(u;\x)$. We further note that we can (independently) thin a Poisson process into another Poisson process, where the intensity of the latter is smaller than the former and the retention probability is given by the ratio of the two intensities \citep{VanLieshoutBook}; the latter is thus locally stable. 
These observations bring us to the following conjecture. 

\begin{conj}
\label{conj}
The function $p_{\lambda,\alpha}(\cdot)$ 
in \eqref{e:RetentionProb} and 
Theorem \ref{thm:GF} 
is given by $p_{\lambda,\alpha}(u)=\lambda(u;\emptyset)/\alpha$, whereby $G(f) 
=
G(1-g) 
=
\exp\left\{-\int_{\s} g(u) \lambda(u;\emptyset) \de u \right\}$ and $\V(B) 
=
\exp\left\{-\int_B \lambda(u;\emptyset) \de u \right\}
$.
\end{conj}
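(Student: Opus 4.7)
The conjecture asserts $p_{\lambda,\alpha}(u) = \lambda(u;\emptyset)/\alpha$, in which case Theorem \ref{thm:GF} immediately supplies both the generating functional and the void probability stated. My plan proceeds in three steps, built around the dependent thinning representation of \citet{last2023disagreement}. First, I would unpack that coupling explicitly: it realises the locally stable Gibbs process $X$ as a thinning of a dominating Poisson process $Y$ of intensity $\alpha$, in which each candidate $x \in Y$ is retained with some conditional probability of the form $\lambda(x;Z(x))/\alpha$, where $Z(x)$ is a configuration of already-accepted points built via a sequential/birth-and-death recipe. The marginal $p_{\lambda,\alpha}(u)$ is then obtained by averaging this conditional probability over the joint law of $Y$ and of the marking scheme, conditional on $u \in Y$.

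Second, I would leverage the two structural constraints already highlighted in the paper's discussion. The quantity $\alpha p_{\lambda,\alpha}(u)$ must be invariant under the choice of $\alpha \geq \sup\lambda$, since the left-hand side of \eqref{e:GF} does not depend on $\alpha$; the candidate $\lambda(u;\emptyset)/\alpha$ satisfies this trivially. A second consistency check is the Poisson case: when $\lambda(u;\cdot) \equiv \rho(u)$, the classical independent-thinning ratio gives $p_{\lambda,\alpha}(u) = \rho(u)/\alpha = \lambda(u;\emptyset)/\alpha$. I would then try to combine these constraints with the monotonicity of the retention probability in $\alpha$ to pin the candidate down structurally. Third, I would close the loop by substituting $p_{\lambda,\alpha}(u) = \lambda(u;\emptyset)/\alpha$ into the representation \eqref{eq:Janossy} and checking that the resulting family $j_{\theta}$ integrates to one, i.e.
\[
\sum_{n\geq 0} \frac{1}{n!} \int_{\s^n} \lambda^n(u_1,\ldots,u_n;\emptyset) \, \de u_1 \cdots \de u_n \;=\; \exp\!\left\{\int_{\s} \lambda(u;\emptyset) \, \de u\right\}.
\]

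The third step is where the real difficulty sits, and is the reason the statement is offered as a conjecture rather than as a theorem. The displayed identity is immediate for Poisson processes (where $\lambda^n(\x;\emptyset)$ factorises), but for a genuinely interacting locally stable process the Georgii-Nguyen-Zessin formula applied to the thinned representation already forces $\alpha p_{\lambda,\alpha}(u) = \rho(u) = \E[\lambda(u;X)]$, and this expectation generally differs from $\lambda(u;\emptyset)$ whenever the process exhibits net attraction or repulsion. A successful proof must therefore either uncover a cancellation inside the \citet{last2023disagreement} coupling that forces the configurational average to collapse to the empty-configuration value, or else refine the scope of the conjecture to a subclass of locally stable Gibbs processes on which this collapse provably occurs. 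Identifying the precise mechanism behind such a collapse, and reconciling it with the GNZ constraint, is in my view the central obstacle.
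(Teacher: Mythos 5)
The first thing to note is that the paper offers no proof of this statement: it is stated as Conjecture \ref{conj}, supported only by the two consistency checks you reproduce in your second step (invariance of $\alpha p_{\lambda,\alpha}$ under the choice of dominating constant $\alpha$, and agreement with the independent-thinning ratio in the Poisson case), together with a remark that ongoing work indicates it holds. Your second step therefore matches the paper's actual argument exactly, and your first and third steps go beyond it. But they do not close the gap either, as you candidly acknowledge: the decisive claim --- that averaging the sequential retention probability $\lambda(x;Z(x))/\alpha$ of the \citet{last2023disagreement} coupling over the law of the marking scheme collapses to the empty-configuration value $\lambda(u;\emptyset)/\alpha$ --- is exactly what would need to be proved, and neither of the two consistency checks pins it down (for instance $p_{\lambda,\alpha}(u)=\rho(u)/\alpha$ passes both equally well). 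So as a proof the proposal is incomplete in the same place the paper is.

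Your third step contains the most substantive observation, and it deserves a caveat. The claim that GNZ forces $\alpha p_{\lambda,\alpha}(u)=\rho(u)=\E[\lambda(u;X)]$ identifies the marginal retention probability $f_{\M}^1(1|u)$ of \eqref{e:RetentionProb} with the retention probability conditional on $u$ being a point of the dominating Poisson process $Y$; for a \emph{dependent} thinning these are Palm-type versus unconditional quantities and need not coincide, so the apparent contradiction with the paper's own computation showing $\lambda(u;\emptyset)\neq\rho(u)$ for attractive or repulsive models is not airtight. That said, the same identification is implicitly made in the paper's proof of Theorem \ref{thm:GF} (the step replacing $\P(M(x)=1\mid Y)$ by $f_{\M}^1(1|x)$), so the tension you have located is genuine: either that step requires separate justification, or the conjecture conflicts with the intensity of the thinned process. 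Making this precise --- rather than the normalisation check of $j_{\theta}$, which follows automatically once the void probability is known --- is where a proof or a refutation of the conjecture will come from.
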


Aside from the purely probabilistic ramifications of this, the statistical impact should not be understated, given that obtaining closed form expressions for likelihood functions for general Gibbs processes, as opposed to resorting to simulation-based approximations \citep{MW04}, has almost been perceived unattainable. 

\begin{remark}
Ongoing work of the author on marked point processes and general thinnings indicates that Conjecture \ref{conj} indeed holds true.
\end{remark}

Under Conjecture \ref{conj}, given $x_0=\emptyset$, the log-likelihood function and the partition function are given by 
\begin{align*}
\ell(\theta;\x)
&=
\log j_{\theta}(\x)
=
\log \lambda_{\theta}^{\#\x}(\x;\emptyset)
+
\log \e^{-\int_{\s} \lambda_{\theta}(u;\emptyset) \de u }
\\
&=
\sum_{i=1}^{\#\x}
\log\lambda_{\theta}(x_i;\{x_0,\ldots,x_{i-1}\})
-\int_{\s} \lambda_{\theta}(u;\emptyset) \de u 
,
\\
\mathcal{Z}_{\theta}
=&
\exp\left\{
- E_{\theta}(\emptyset)
+\int_{\s} \lambda_{\theta}(u;\emptyset) \de u \right\}
.
\end{align*}
In addition, the associated score function here takes the form
\begin{align*}
s(\theta,\x)
&=
\nabla_{\theta} \ell(\theta;\x)
=
\nabla_{\theta}
\sum_{i=1}^{\#\x}
\log\lambda_{\theta}(x_i;\{x_0,\ldots,x_{i-1}\})
-
\int_{\s} 
\alpha p_{\lambda,\alpha}(u;\theta) \de x 
\\
&=
\sum_{i=1}^{\#\x}
\frac{
\nabla_{\theta}
\lambda_{\theta}(x_i;\{x_0,\ldots,x_{i-1}\})
}
{\lambda_{\theta}(x_i;\{x_0,\ldots,x_{i-1}\})}
-\int_{\s} 
\nabla_{\theta}
\alpha p_{\lambda,\alpha}(u;\theta) \de x
\\
&=
\left(-\nabla_{\theta}E_{\theta}(\x) + \nabla_{\theta}E_{\theta}(\emptyset) -\int_{\s} 
\nabla_{\theta}
\alpha p_{\lambda,\alpha}(u;\theta) \de x\right)
j_{\theta}(\x)
, 
\end{align*}
where $\alpha p_{\lambda,\alpha}(u;\theta)=\lambda_{\theta}(x;\emptyset)$ under the conjecture, 
provided that \eqref{e:RetentionProb} is sufficiently nice to be differentiated and to allow for us to interchange the integral $\int_{\s}$ and the gradient operator $\nabla_{\theta}$. 

As for a Poisson process we have $\lambda_{\theta}(x;\cdot)=\rho_{\theta}(x)$, i.e.\ the conditional intensity coincides with the intensity function, we immediately see that the general log-likelihood function form reduces to the well-known Poisson process log-likelihood function. 
To illustrate further, consider the hard-core model \citep{BRT15}, where 
$$
\lambda_{\theta}(u;\x) 
= 
\beta \1\{d(u,x)>R\text{ for all }x\in\x\setminus\{u\}\}
, 
\quad 
u\in\s,\x\in\X,
\theta=(\beta,R)\in(0,\infty)\times[0,\infty); 
$$
note that that $R=0$ yields a Poisson process with intensity $\beta$. Here 
$$
j_{\theta}(\x)
=
\lambda_{\theta}^{\#\x}(\x;\emptyset)
\e^{-\int_{\s} \lambda_{\theta}(u;\emptyset) \de u } = \beta^{\#\x} 
\1\{d(x_i,x_j)>R\text{ for all }x_i,x_j\in\x, x_i\neq x_j\}
\e^{-\beta|\s|}
$$
and the exact energy function can be obtained explicitly by consulting e.g.\ \citet[Section 6.2]{MW04}, which in turn would yield the associated partition function. 
This means that the estimate of the hard-core distance $R$ must be at least as large as the smallest distance between any two points of $\x$. This highlights something addressed by e.g.\ \citet{jansson2024gibbs}, namely that the hard-core model is not identifiable for most statistical approaches. 
Fixing the estimate of the hard-core distance to be in this range, we are left with maximising $\beta\mapsto\log(\beta^{\#\x} \e^{-\beta|\s|}) = \#\x \log \beta - |\s|\beta$, whose derivative with respect to $\beta$ is $\beta\mapsto \#\x/\beta - |\s|$. Setting this to 0 and solving for $\beta$ we obtain the estimate $\#\x/|\s|$ for the abundance parameter $\beta$, which is just the classical estimate of the intensity of a homogeneous point process.

It is also worth writing down the likelihood ratio, given any $\theta_1,\theta_2\in\Theta$, i.e.
\begin{align*}
\frac{j_{\theta_1}(\x)}{j_{\theta_2}(\x)}
=
\frac{\lambda_{\theta_1}^{\#\x}(\x;\emptyset)
\e^{-\int_{\s} \lambda_{\theta_1}(u;\emptyset) \de u }}
{\lambda_{\theta_2}^{\#\x}(\x;\emptyset)
\e^{-\int_{\s} \lambda_{\theta_2}(u;\emptyset) \de u }}
=
\prod_{i=1}^{\#\x}
\frac{\lambda_{\theta_1}(x_i;\{x_0,\ldots,x_{i-1}\})}{\lambda_{\theta_2}(x_i;\{x_0,\ldots,x_{i-1}\})}
\e^{-\int_{\s} \lambda_{\theta_1}(u;\emptyset) - \lambda_{\theta_2}(u;\emptyset)\de u }
.
\end{align*}
This could potentially be used to construct likelihood ratio tests. 

We also note that in the Bayesian setting 
the posterior density of the random parameter $\theta$ satisfies
\begin{align*}
f_{\theta|X}(\beta|\x)
\propto&
j_{\beta}(\x) f_{\theta}(\beta)
\\
=&
    \lambda_{\beta}(x_1;\emptyset)
  \lambda_{\beta}(x_2;\{x_1\})
  \cdots
  \lambda_{\beta}(x_{\#\x};\{x_1,\ldots,x_{\#\x-1}\})
\\
&\times
\exp\left\{-\int_{\s} \alpha p_{\lambda,\alpha}(u;\beta) \de u \right\}
f_{\theta}(\beta)
,
\end{align*}
where $f_{\theta}(\beta)$, $\beta\in\Theta$, is the prior density and $\alpha p_{\lambda,\alpha}(u;\theta)=\lambda_{\theta}(x;\emptyset)$ under the conjecture.

Given the form of the void probability of a Poisson process, 
one may be tempted to think that we can simply replace $\lambda_{\theta}(u;\emptyset)$ by $\rho_{\theta}(u)$ in the conjecture. However, as 
$\rho_{\theta}(u) 
= \E[\lambda_{\theta}(u;X)] = \E[\lambda_{\theta}(u;X)|X_{\theta}=\emptyset]\V_{\theta}(\s) 
+ 
\E[\lambda_{\theta}(u;X_{\theta})|X_{\theta}\neq\emptyset](1-\V_{\theta}(\s))
$, 
we have   
$$
\frac{
\lambda_{\theta}(u;\emptyset)}{\rho_{\theta}(u)} 
=
\V_{\theta}(\s)^{-1} 
-
\frac{\E[\lambda_{\theta}(u;X_{\theta})|X_{\theta}\neq\emptyset]}{\rho_{\theta}(u)}
(\V_{\theta}(\s)^{-1}-1)
.
$$ 
It is clear that the right-hand side is 1 for a Poisson process but this does not hold in general; for an attractive model we have that $\E[\lambda_{\theta}(u;X_{\theta})|X_{\theta}\neq\emptyset]\geq \E[\lambda_{\theta}(u;X_{\theta})] = \rho_{\theta}(u)$, whereby $\lambda_{\theta}(u;\emptyset)\leq\rho_{\theta}(u)$, while for a repulsive one we have that $\E[\lambda_{\theta}(u;X_{\theta})|X_{\theta}\neq\emptyset]\leq \E[\lambda_{\theta}(u;X_{\theta})] = \rho_{\theta}(u)$, whereby $\lambda_{\theta}(u;\emptyset)\geq\rho_{\theta}(u)$. Provided that the conjecture is true, this results in either $\V(B) 
\geq
\e^{-\int_B \rho_{\theta}(u) \de u}$ or $\V(B) 
\leq
\e^{-\int_B \rho_{\theta}(u) \de u}$. 
These observations reveal part of the problem of using the pseudolikelihood function instead of the likelihood function when the underlying model possesses strong interactions.

\section{Locally stable discrete random fields}

Consider a finite collection of 'sites' $\s=\{s_1,\ldots,s_k\}$ equipped with the discrete topology, which makes $\s$ a Polish space and thereby a general space, as well as the counting measure as reference measure $|\cdot|$. Most naturally, we may think of $\s=\Z^d\cap C$ for some bounded $C\subseteq\R^d$, i.e.\ the $d$-dimensional integer lattice $\Z^d$ embedded in the Euclidean $\R^d$ and intersected with $C$; we thus have the metric $d(s_i,s_j)=\|s_i-s_j\|$, $s_i,s_j\in\s$, where $\|\cdot\|$ is the Euclidean norm. 
A point process $X\subseteq\s$, which is automatically finite, thus represents a number of sites which are occupied. If enumerate the sites in some convenient way
and let 
$$
\widetilde{X} = (X_i)_{i=1}^k = (\1\{s\in X\}:s\in\s)\in\M^k=\{0,1\}^k, 
$$
we obtain a discrete random field on $\s$, where each entry takes either value $0$ or $1$. Density functions of such random fields typically have the Gibbs state form \citep{van2019theory}
\[
p_{\theta}^{\s}(\vec{\x}) 
= 
\frac{1}{\widetilde{\mathcal{Z}}_{\theta}}
\exp\left\{-\widetilde{E}_{\theta}(\vec{\x})\right\}
=
\frac{1}{\widetilde{\mathcal{Z}}_{\theta}}
\exp\left\{
\sum_{T\subseteq\s}
V_{\theta}(\vec{\x};T)
\right\},
\qquad
\vec{\x}\in\M^k=\{0,1\}^k,
\theta\in\Theta
,
\]
where the collection of functions $V_{\theta}(\cdot;T):\{0,1\}^k\to\R$, $T\subseteq\s$, with $V_{\theta}(\cdot;\emptyset)=0$ and $V_{\theta}(\vec{\x};T)$ only depending on $\vec{\x}$ restricted to $T$, is called the associated interaction potential. 
The most prominent example is an Ising model, where \citep{van2019theory}
\begin{align*}
\sum_{T\subseteq\s}
V_{\theta}(\vec{\x};T)
=&
\theta_1
\sum_{x\in\vec{\x}}\1\{x=1\}-\1\{x=0\}
\\
&
+
\theta_2
\sum_{x,y\in\vec{\x}:x\sim y}
(\1\{x=1\}-\1\{x=0\})
(\1\{y=1\}-\1\{y=0\})
.
\end{align*}
Here, the parameter $\theta_1$ governs the prevalence of 1:s while $\theta_2$, the so-called inverse temperature, governs the strength of interaction. Moreover, $x\sim y$ means that the sites with values $x$ and $y$ are neighbours,
according to a symmetric neighbourhood relation $\sim$ on $\s$.

As before, the partition function, i.e.\ the normalising constant $\widetilde{\mathcal{Z}}_{\theta}$, is not tractable, which in turn renders the density/likelihood function $p_{\theta}$ intractable. Consequently, a more convenient form of representation is the associated local characteristics \citep{van2019theory}
\[
\tilde{\pi}_i(x_i|\vec{\x}_{-i};\theta)
= 
\frac{p_{\theta}^{\s}(\vec{\x})}
{p_{\theta}^{\s\setminus\{s_i\}}(\vec{\x}_{-i})},
\qquad i\in\s,\vec{\x}\in\M^k=\{0,1\}^k,
\theta\in\Theta,
\]
giving the conditional density for the value at any location $i$, given the values at all other sites. 
Here, $p_{\theta}^{\s\setminus\{s_i\}}(\cdot)$ is a density on the sites $\s\setminus\{i\}$ (under suitable regularity) and $\vec{\x}_{-i}$ is $\vec{\x}$ with the entry on position $i$ removed. 
In particular, for an Ising model we have that \citep{van2019theory}
\[
\tilde{\pi}_i(x_i|\vec{\x}_{-i};\theta)
= \frac{\exp\left\{\theta_1 + 
\theta_2\sum_{y\in\vec{\x}_{-i}:x_i\sim y}
(\1\{y=1\}-\1\{y=0\})
\right\}}
{1 + \exp\left\{\theta_1 + 
\theta_2\sum_{y\in\vec{\x}_{-i}:x_i\sim y}
(\1\{y=1\}-\1\{y=0\})
\right\}}.
\]
Writing $(s_i,x_i)$, $i=1,\ldots,k$, for the site-value pairs of $\s$ and $\vec{\x}$, by \eqref{e:PapangelouFinite} we have that $\tilde{\pi}_i(1|\vec{\x}_{-i};\theta) = \lambda_{\theta}(s_i;\{s_j\in\s:x_j\in\vec{\x}_{-i},x_j=1\})$ and $\tilde{\pi}_i(0|\vec{\x}_{-i};\theta) = 1 - \lambda_{\theta}(s_i;\{s_j\in\s:x_j\in\vec{\x}_{-i},x_j=1\})$, assuming that $\lambda_{\theta}$ is the conditional intensity of $X$. 
We further see that locally stability of $X$ implies that all local characteristics are bounded by $\alpha$. Hence, under Conjecture \ref{conj}, the associated log-likelihood function is given by 
\begin{align*}
\ell(\theta;\vec{\x})
&=
\sum_{i=1}^{\#\x}
\log\lambda_{\theta}(x_i;\{x_0,\ldots,x_{i-1}\})
-\int_{\s} \lambda_{\theta}(u;\emptyset) \de u 
\\
&=
\sum_{i=1}^{k}
\1\{x_i=1\} \log \tilde{\pi}_i(1|
(x_j:1\leq j\leq i, x_j=1)
;\theta)
- 
\tilde{\pi}_i(1|(0,\ldots,0);\theta)
,
\end{align*}
where $\x$ is the set consisting of the entries of $\vec{\x}$. 
Note that the integral becomes a sum here and that $(x_j:1\leq j\leq i, x_j=1)$ is a vector of dimension less than $k$ while in the second term $(0,\ldots,0)$ is a vector of zeros of length $k-1$. Consequently, we consider conditional density models on reduced site sets to build the likelihood function. Moreover, by construction, this function is invariant to the order in which we label the sites.

\begin{remark}
If the values of $\widetilde X$ are not necessarily binary, e.g.\ $\M=\{0,1,\ldots,l\}$ as in a Potts model or $\M$ given by some interval, we can follow the same procedure and set the smallest value of $\M$ as a reference value, which we identify with no occupation, while all other states are identified with occupation. To include the specific values at all sites, we assign these as marks to the points of $X$, using $\M$ as mark set. Note that when $\M$ is an interval we almost surely have occupation at all sites. Moreover, this would require the use of Papangelou conditional intensities for marked point processes.

\end{remark}



\section{Discussion}

Gibbs/Boltzmann densities, which date back to the mid 1800's, have had a significant use and interest in a range of different fields, including spatial statistics and statistical physics. A major hurdle in their study, however, has been the intractability of their normalising constants, the so-called partition functions. 
This work provides a general solution to obtaining closed-form expressions for densities of finite Gibbs point processes. To achieve this, the paper exploits recent a recent representation of density functions for point processes along with recent results on representing locally stable point processes as thinnings of Poisson processes. 
The paper further shows that the proposed solution may also be applied to certain Gibbs fields, most notably Ising models on finite lattices. 

Overall, this work fills an important gap and takes a significant step toward making Gibbs distributions more accessible and usable, by removing the limitation of having densities only specified up to normalising constants. The ability to work directly with normalised probability densities should enable e.g.\ new modelling approaches and analyses.
It is, however, hard to anticipate what the exact ramifications of the new findings will be, but the author is hopeful that these new advancements will contribute to opening up new avenues in the study of spatial random models, both in applied and theoretical settings. 
Personally, the author has a particular interest in evaluating how likelihood estimation performs in comparison to the recent Point Process Learning methodology \citep{cronie2024cross,jansson2024comparison}. 

\section{Acknowledgements}
The author is grateful to Christian Hirsch, Mathis Rost, Aila Särkkä and Moritz Schauer for fruitful discussions.
This research has been supported by the Swedish Research Council (2023-03320).

\pagebreak

\bibliographystyle{dcu}
\bibliography{Refs.bib}

\end{document}